\documentclass[final]{article}

\usepackage{showkeys}
\usepackage{amsmath,amsthm,amssymb,amsfonts}
\usepackage{graphicx,color}

% alias

\newcommand{\R}{\mathbb{R}}
\renewcommand{\S}{\mathbb{S}}
\newcommand{\N}{\mathbb{N}}

\newcommand{\eps}{\varepsilon}

% pour le papier

\DeclareMathOperator*{\osc}{osc}
\DeclareMathOperator{\tr}{tr}

% nouveaux environnements

\newtheorem{proposition}{Proposition}
\newtheorem{lemma}{Lemma}

\newtheorem{theorem}{Theorem}

\theoremstyle{definition}

\theoremstyle{remark}

\newtheorem{example}{Example}

% debut du document

\author{C. Imbert\footnote{CNRS, UMR 8050 \& Laboratoire d'analyse et
    de math\'ematiques appliqu\'ees, Universit\'e Paris-Est
    Cr\'eteil Val de Marne, 61 avenue du g\'en\'eral de Gaulle, 94010
    Cr\'eteil cedex, France} and L. Silvestre\footnote{Mathematics
    Department, University of Chicago, Chicago, Illinois 60637, USA}}

\title{$C^{1,\alpha}$ regularity of solutions of degenerate fully
  non-linear elliptic equations}

\begin{document}

\maketitle

\begin{abstract}  In the present paper, a class of fully
non-linear elliptic equations are considered, which are degenerate as the gradient becomes small. H\"older estimates obtained by the first author (2011) are combined with new Lipschitz estimates obtained through the Ishii-Lions method in order to get $C^{1,\alpha}$ estimates for solutions of these  equations.
\end{abstract}

\paragraph{Keywords:} $C^{1,\alpha}$ estimates, degenerate elliptic
equations, fully non-linear elliptic equations, viscosity solutions

\paragraph{MSC:} 35B45,  35J60, 35J70, 35D40

\section{Introduction}

This paper is concerned with the study of the regularity of solutions
of the following non-linear elliptic equation
\begin{equation}\label{eq:main}
|\nabla u |^{\gamma} F(D^2 u) = f \qquad \text{ in } B_1
\end{equation}
where $B_1$ is the unit ball of $\R^d$ and $\gamma > 0$, $F$ is
  uniformly elliptic, $F(0)=0$ and $f$ is bounded. 

\paragraph{Singular/degenerate fully non-linear elliptic equations.}
Equation~\eqref{eq:main} makes part of a class of non-linear elliptic
equations studied in a series of papers by Birindelli and Demengel,
starting with \cite{bd04}. The specificity of these equations is that
they are not uniformly elliptic; they are either singular or
degenerate (in a way to be made precise). 

Birindelli and Demengel proved many important results in the singular
case such as comparison principles and Liouville type results
\cite{bd04}, regularity and uniqueness of the first eigenfunction
\cite{bd10} \textit{etc.} In the degenerate case, the set of results
\cite{bd07,bd09} is less complete and in particular, there was no
$C^{1,\alpha}$ estimate in the non-radial  case (see 
\cite{bd10b} for the radial case).

Alexandrov-Bakelman-Pucci (ABP) estimate were obtained for such
equations independently in \cite{dfq09} and \cite{i11}. It was used
to derive Harnack inequality in the singular case in \cite{dfq10} and in
both cases in \cite{i11}. From Harnack inequality, it is classical to
derive H\"older estimate (\cite{dfq10} in the singular case,
\cite{i11} in both cases). 

\paragraph{Main result.} The main result of this paper is the following
\begin{theorem}\label{thm:main}
Assume that $\gamma \geq 0$, $F$ is uniformly elliptic, $F(0)=0$,
  and $f$ is bounded in $B_1$.  There exists $\alpha>0$ and $C>0$ only
  depending on $\gamma$, the ellipticity constants of $F$ and
  dimension $d$, such that any viscosity solution $u$ of
  \eqref{eq:main} is $C^{1,\alpha}$ and
$$
[u]_{1+\alpha,B_{1/2}} \le C \left(||u||_{L^\infty} +
||f||_{L^\infty}^{ \frac1{1+\gamma}} \right). 
$$
\end{theorem}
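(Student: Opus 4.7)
The plan is to follow the route indicated in the abstract: first establish that $u$ is Lipschitz by an Ishii--Lions doubling-variable argument, then upgrade to $C^{1,\alpha}$ by an iterative dichotomy combining the Lipschitz estimate with the H\"older estimate of~\cite{i11} and the classical uniformly elliptic $C^{1,\alpha}$ theory.

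\emph{Normalization.} Set $v = \delta u$ with $\delta = (\|u\|_{L^\infty} + \|f\|_{L^\infty}^{1/(1+\gamma)} + 1)^{-1}$; then $v$ solves an equation of the same type with $\|v\|_{L^\infty}\le 1$ and $\|\tilde f\|_{L^\infty}\le \eps_0$ for $\eps_0$ as small as one wishes, and undoing the scaling at the end produces the exact form of the estimate stated in the theorem.

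\emph{Step 1: Lipschitz estimate (Ishii--Lions).} Fix $x_0\in B_{3/4}$ and apply the Ishii--Lions method to
\[
\Phi(x,y) = u(x) - u(y) - L\,\phi(|x-y|) - M\bigl(|x-x_0|^2 + |y-x_0|^2\bigr)
\]
on $\overline B_1\times \overline B_1$, with a strictly concave modulus $\phi$ (for instance $\phi(t) = t - \kappa t^{3/2}$). If $L$ and $M$ are chosen large (depending only on $d$, $\gamma$ and the ellipticity constants), the supremum must be attained on the diagonal, giving the Lipschitz bound at $x_0$. Otherwise, at an interior maximum $(\bar x, \bar y)$ with $\bar x\ne\bar y$, the Crandall--Ishii lemma supplies matrices $X,Y$ for which $X-Y$ is strongly negative in the direction $\bar x - \bar y$, with magnitude of order $L$ thanks to the concavity of $\phi$, while the common gradient variable $p$ has norm bounded below by $cL$. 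Hence $|p|^\gamma \ge cL^\gamma$; dividing both viscosity inequalities by $|p|^\gamma$ and applying the ellipticity of $F$ to the matrix inequality yields an inequality of the form $cL^{1+\gamma} \le C\|f\|_{L^\infty}$, which is a contradiction for $L$ large enough. This gives $\|\nabla u\|_{L^\infty(B_{3/4})} \le C$.

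\emph{Step 2: $C^{1,\alpha}$ via dichotomy and iteration.} At each $x_0\in B_{1/2}$ the strategy is to build inductively affine approximants $\ell_k(x) = a_k + p_k \cdot(x - x_0)$ and a contraction ratio $\rho\in(0,1)$ so that $\|u - \ell_k\|_{L^\infty(B_{\rho^k}(x_0))} \le \rho^{k(1+\alpha)}$ and $|p_{k+1} - p_k| \le C\rho^{k\alpha}$. Rescaling via $v(x) = \rho^{-k(1+\alpha)}\bigl(u(x_0 + \rho^k x) - \ell_k(x_0 + \rho^k x)\bigr)$ gives a function of unit oscillation solving an equation of the same form, but with an additional constant gradient shift $q_k$ of size controlled by the Lipschitz bound of Step~1. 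A dichotomy applies: if $|q_k|\ge \mu$ for a universal threshold $\mu$, the factor $|\nabla v + q_k|^\gamma$ stays comparable to $|q_k|^\gamma$, the rescaled equation is uniformly elliptic in a quantitative sense, and the classical Caffarelli--Trudinger $C^{1,\alpha}$ estimate closes the inductive step; if $|q_k|< \mu$, a compactness-stability argument (using the H\"older estimate of~\cite{i11} to pre-compactify) shows that $v$ is $L^\infty$-close to a solution $w$ of $F(D^2 w) = 0$, which is $C^{1,\alpha}$ by Caffarelli's classical theory, and approximating $v$ by the affine part of $w$ supplies $\ell_{k+1}$ and $p_{k+1}$. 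A geometric summation of the increments $|p_{k+1}-p_k|$ together with the undoing of the initial normalization yields Theorem~\ref{thm:main}.

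\emph{Main obstacle.} The hardest point is Step~1: the Ishii--Lions argument must be executed so that the gradient produced by the penalization is genuinely large enough to dominate the degeneracy factor $|p|^\gamma$ of the equation. This requires a careful calibration of the concave modulus $\phi$ and of the parameters $L, M$, and a quantitative exploitation of the Crandall--Ishii matrix inequality. In Step~2 the analogous subtlety is to make the dichotomy quantitative and uniform across scales, so that a single $\rho$ and $\alpha$ work at every step.
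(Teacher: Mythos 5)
Your overall strategy coincides with the paper's: normalize by scaling, iterate an improvement-of-flatness step, and inside each step split according to the size of the constant gradient shift, treating the nondegenerate regime by Ishii--Lions and the degenerate one by the H\"older estimate of \cite{i11} plus compactness. There are, however, two genuine gaps. The first is in the branch $|q_k|\ge\mu$. To claim that $|\nabla v+q_k|^\gamma$ stays comparable to $|q_k|^\gamma$ you need an a priori gradient bound on the \emph{rescaled} solution $v$, uniform in $q_k$ and smaller than $\mu$; since $v$ still solves a degenerate equation, this bound cannot be imported from classical uniformly elliptic theory and must itself come from an Ishii--Lions argument. Your Step~1 does not supply it: there the nondegeneracy at the doubling maximum comes from the penalization gradient being of size $cL$, which works precisely because the shift is zero (or small compared with $L$). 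Once a shift $q_k$ of size comparable to $L$ is present, $q_k$ can cancel the test-function gradient, $|q_k+q_x|$ can vanish, and the viscosity inequalities give no information. The paper's resolution is to run Ishii--Lions only for $|q_k|\ge 1/a_0$ with $1/a_0$ chosen \emph{after} and larger than the constants $L_1,L_2$ of the doubling (so that $|q_k+q_x|\ge|q_k|/2$ by the triangle inequality), and to cover the entire complementary range $|q_k|\le 1/a_0$ --- not only ``small'' shifts --- by the H\"older estimate of \cite{i11}, which tolerates degeneracy on bounded gradient sets. Your dichotomy is repairable in exactly this way, but $\mu$ must be calibrated against the Lipschitz constants and the Lipschitz lemma must be proved for the shifted equation; the computation of Step~1 addresses the wrong regime.

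The second gap is in the compactness branch. Stability of viscosity solutions only yields that the limit $w$ solves $|q_\infty+\nabla w|^\gamma F(D^2w)=0$ for some bounded limit shift $q_\infty$, and it is not automatic that such a $w$ solves $F(D^2w)=0$: the operator vanishes wherever a test function has gradient $-q_\infty$, so the degenerate limit equation carries no obvious information there. The paper devotes a separate lemma (its Section~5) to this point, perturbing test functions by $\varepsilon|P_Sx|$ to rule out degenerate touching; without it the approximation of $v$ by an $F$-harmonic function, and hence the construction of $\ell_{k+1}$, is unjustified. A minor further point: your normalization constant $\delta$ should carry a factor $\varepsilon_0^{-1/(1+\gamma)}$ on the $\|f\|_{L^\infty}$ term and no additive $+1$, otherwise undoing the scaling produces an extra additive constant rather than the homogeneous estimate stated in the theorem.
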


\paragraph{Comments.} 
Getting $C^{1,\alpha}$ estimates consists in proving that the graph of
the function $u$ can be approximated by planes with an error bounded
by $C r^{1+\alpha}$ in balls of radius $r$. The proof is based on an
iterative argument, in which we show that the graph of $u$ gets
\emph{flatter} (meaning better approximated by planes) in smaller
balls. The iterative step, after a rescaling, amounts to show that if
$p\cdot x+u$ satisfies \eqref{eq:main} in $B_1$ with $\osc u \le 1$,
then the oscillation of $u$, up to a linear function $p'\cdot x$, is
smaller in a smaller ball. This is proved by compactness. In order to
make such an argument work, the modulus of continuity of $u$ has to be
controlled independently of the slopes $p$ and $p'$ which can vary
from one scale to the other. There is a difficulty since $u-p \cdot x$
does not satisfy any PDE independently of $p$. The main originality of
this paper is to combine the method introduced by Ishii and Lions
\cite{il90} to get Lipschitz estimate in the case of large slopes and
the Harnack inequality approach of Krylov-Safonov-Caffarelli
\cite{cc95} adapted in \cite{i11} to the present framework for small
slopes.

An alternative approach to find a modulus of continuity for solutions of the rescaled equation (see \eqref{eq:rescaled} below) for large slopes
could be to apply the Harnack inequality from \cite{savin2007small} to
get a uniform H\"older modulus of continuity for $|p|$ large enough
instead of the Ishii-Lions method to get a uniform Lipschitz
estimate. We chose the latter approach because of its simplicity.

The following example shows that solutions $u$ of \eqref{eq:main}
cannot be more regular than $C^{1,\alpha}$, even if $f$ is H\"older
continuous. 
\begin{example}
The function $u (x) = |x|^{1+\alpha}$ satisfies 
$$
|Du|^\gamma \Delta u
= C |x|^{(1+\alpha)(\gamma+1)-(\gamma+2)}
$$
 where
$C=(1+\alpha)^{1+\gamma} (d+\alpha -1)$. In particular, if we choose $\alpha = 1/(1+\gamma)$ the right hand side is simply constant. This example shows that even for a constant right hand side and $F(D^2 u)=\Delta u$, we cannot expect in general the solution to be more regular than $C^{1,\alpha}$ with $\alpha<1$. 
\end{example}

As far as the authors know, the result of Theorem \ref{thm:main} is new even for the simple equation $|\nabla u|^\gamma \Delta u = f(x)$. For this case we expect the optimal $\alpha$ to be in fact equal to $1/(1+\gamma)$ although we did not work on that issue. For general fully nonlinear equations $F(D^2 u)$ the value of $\alpha$ can get arbitrarily small even in the case $\gamma=0$ (see \cite{NV2} for an example).

The paper is organized as follows. In section~2 we specify the
notation to be used in the paper and we review a few well known
definitions and results for fully nonlinear elliptic equations. In
section~3 we restate Theorem 1 in a simplified form simply by
rescaling. In section~3, we also show how the iteration of the
\emph{improvement of flatness} lemma implies the main theorem. The
methods of section~3 are more or less standard for proving
$C^{1,\alpha}$ regularity for elliptic equations. In section~4 we find
a uniform modulus of continuity for the difference between the
solution and a plane appropriately rescaled. Based on this continuity
estimates we prove the improvement of oscillation lemma by a
compactness argument. In the last section we show a technical lemma
that says that viscosity solutions to $|\nabla u|^\gamma F(D^2 u)=0$
are also viscosity solutions to $F(D^2 u)=0$. This lemma is used to
characterize the limits in the compactness argument for the proof of
the improvement of flatness lemma in section~4.
\section{Preliminaries}

\subsection{Notation} 

For $r >0$, $B_r(x)$ denotes the open ball of
radius $r$ centered at $x$. $B_r$ denotes $B_r(0)$. $\S_d$ denotes the
set of symmetric $d \times d$ real matrices. $I$ denotes the identity
matrix. 

For $\alpha \in (0,1]$ and $Q \subset \R^d$, we consider
\begin{eqnarray*}
[u]_{\alpha,Q} &=& \sup_{x,y \in Q, x\neq y} \frac{u(x)-u(y)}{|x-y|^\alpha}, \\ 
{}[u]_{1+\alpha,Q} &=& \sup_{\rho>0, x \in Q}
\inf_{p \in \R^d} \sup_{z \in B_\rho(x) \cap Q} |u (z) - p\cdot z|.
\end{eqnarray*}

\subsection{Uniform ellipticity}

We recall the definition of uniform ellipticity (see \cite{cc95} for
more details). We say that a function $F$ defined on the set of real
symmetric matrices and taking real values is uniformly elliptic if
there exist two positive constants $\lambda$ and $\Lambda$ such that
for any two symmetric matrices $X$ and $Y$, with $Y \geq 0$ we have
\[ \lambda \tr Y \leq F(X) - F(X+Y) \leq \Lambda \tr Y.\]
The constants $\lambda$ and $\Lambda$ are called the \emph{ellipticity
  constants}. Under this definition $F(X) = -\tr (X)$ is uniformly
elliptic with ellipticity constants $\lambda=\Lambda=1$, and $F(D^2u)
= -\Delta u = f(x)$ is a uniformly elliptic equation.

The maximum and minimum of all the uniformly elliptic functions $F$
such that $F(0)=0$, are called the \emph{Pucci operators}. We write
them $P^+$ and $P^-$. Recall that $P^-$ has the closed form
\[ P^-(X) = -\Lambda \tr X^+ - \lambda \tr X^-, \]
where $\tr X^+$ is the sum of all positive eigenvalues of $X$ and $\tr
X^-$ is the sum of all negative eigenvalues of $X$. With the
definition of $P^+$ and $P^-$ at hand, it is equivalent that $F$ is
uniformly elliptic with the inequality
\[ P^-(Y) \leq F(X+Y) - F(X) \leq P^+(Y),\]
for any two symmetric matrices $X$ and $Y$.

\subsection{Two observations}

The uniform ellipticity hypothesis on $F$ implies that there exist
$\alpha_0 \in (0,1)$ and $C>0$ such that viscosity solutions of
$F(D^2u)=0$ in $B_1$ are $C^{1,\alpha_0}$ in the interior of $B_1$ and
$$
[u]_{1+\alpha_0,B_{1/2}} \le C ||u||_{L^\infty(B_1)}.
$$ 
The constants $\alpha_0$ and $C$ depend on the ellipticity constants
and dimension only.

Note that for any constant $a >0$, the function $a^{-1} F(aX)$ has
the same ellipticity constants as $F$. This will be important when
rescaling the equation.

\section{Reduction of the problem}

In this section, we first show that a simple rescaling reduces the
proof of the problem to the case that $||u||_{L^\infty}\leq 1/2$ and
$||f||_{L^\infty} \leq \eps_0$ for some small constant $\eps_0$ which
will be chosen later. We then further reduce the proof to an
\emph{improvement of flatness} lemma.

\subsection{Rescaling}

  We work with the arbitrary normalization $||u||_{L^\infty}\leq
1/2$ because that implies that $\osc u \leq 1$ and that will be a good
starting point for our iterative proof of $C^{1,\alpha}$ regularity.

\begin{proposition}
In order to prove Theorem~\ref{thm:main}, it is enough to prove
that \[[u]_{1+\alpha,B_{1/2}} \leq C\] assuming
$||u||_{L^\infty(B_1)} \leq 1/2$ and $||f||_{L^\infty(B_1)} \leq
\eps_0$ for some $\eps_0>0$ which only depends on the ellipticity
constants, dimension and $\gamma$. 
\end{proposition}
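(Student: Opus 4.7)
The plan is a pure amplitude rescaling. Let $u$ be a viscosity solution of \eqref{eq:main} in $B_1$ (the trivial case $u\equiv 0$ is immediate, and if $f\equiv 0$ only the first of the two normalisations below is needed). Set
\[
\mu := \min\!\left\{ \frac{1}{2\,\|u\|_{L^\infty(B_1)}},\; \left(\frac{\eps_0}{\|f\|_{L^\infty(B_1)}}\right)^{\!\frac{1}{1+\gamma}} \right\},
\]
and consider $v(x) := \mu\, u(x)$. Since $\nabla v = \mu\,\nabla u$ and $D^2 v = \mu\,D^2 u$, one checks that $v$ is a viscosity solution of
\[
|\nabla v|^\gamma \tilde F(D^2 v) = \tilde f \qquad \text{in } B_1,
\]
where $\tilde F(X) := \mu\,F(\mu^{-1}X)$ and $\tilde f := \mu^{1+\gamma}f$. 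By the observation in Section~2.3, $\tilde F$ has the \emph{same} ellipticity constants as $F$, and clearly $\tilde F(0)=0$. The choice of $\mu$ then gives $\|v\|_{L^\infty(B_1)} \le 1/2$ and $\|\tilde f\|_{L^\infty(B_1)} \le \eps_0$, so $v$ satisfies the hypotheses of the simplified statement.

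Applying the assumed estimate to $v$ (with the operator $\tilde F$) yields $[v]_{1+\alpha,B_{1/2}} \le C$. As no spatial rescaling has been performed and the seminorm $[\,\cdot\,]_{1+\alpha,B_{1/2}}$ is linear in its argument, I may conclude $[u]_{1+\alpha,B_{1/2}} = \mu^{-1}[v]_{1+\alpha,B_{1/2}} \le C/\mu$. Using $\|u\|_{L^\infty(B_1)}\le \|u\|_{L^\infty(B_1)}+\|f\|_{L^\infty(B_1)}^{1/(1+\gamma)}$ and the analogous inequality for $\|f\|_{L^\infty(B_1)}^{1/(1+\gamma)}$ in the definition of $\mu$, one obtains the elementary lower bound
\[
\mu \;\ge\; \frac{\min\{1/2,\;\eps_0^{1/(1+\gamma)}\}}{\|u\|_{L^\infty(B_1)}+\|f\|_{L^\infty(B_1)}^{1/(1+\gamma)}},
\]
which, combined with $[u]_{1+\alpha,B_{1/2}} \le C/\mu$, gives exactly the bound claimed in Theorem~\ref{thm:main}, with a new constant still depending only on $\gamma$, $d$, and the ellipticity constants.

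There is essentially no analytic difficulty here: the reduction is pure bookkeeping. The only nontrivial inputs are that amplitude rescaling preserves the viscosity solution property (standard for this class of equations) and that $\tilde F$ inherits the ellipticity constants of $F$, which is precisely the observation in Section~2.3. It is worth noting that the homogeneity $|\nabla(\mu u)|^\gamma \tilde F(D^2(\mu u)) = \mu^{1+\gamma}f$ of the equation under amplitude rescaling is exactly what forces the exponent $1/(1+\gamma)$ attached to $\|f\|_{L^\infty}$ in the final estimate; no spatial dilation is needed at this step.
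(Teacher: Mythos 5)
Your proposal is correct and follows essentially the same route as the paper: a pure amplitude rescaling $v=\mu u$, the observation that $\mu F(\mu^{-1}X)$ keeps the same ellipticity constants, and scaling back using the homogeneity of the seminorm. The only cosmetic difference is that you take $\mu$ as a minimum of the two normalising factors while the paper takes the reciprocal of their sum, which changes nothing but the constant.
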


\begin{proof}
Given any function $u$ under the assumptions of Theorem
\ref{thm:main}, we can take $\kappa = \left(2||u||_{L^\infty} +
(||f||_{L^\infty}/\eps_0)^{1/(1+\gamma)} \right)^{-1}$ and consider
the scaled function $\tilde u(x) = \kappa u(x)$ solving the equation
\[ 
|\nabla \tilde u|^\gamma \kappa F(\kappa^{-1} D^2\tilde u) =
\kappa^{1+\gamma} f(x). 
\]
We previously made the observation that the function $\kappa
F(\kappa^{-1}X)$ has the same ellipticity constants as $F(X)$. But now
$||\tilde u||_{L^\infty} \leq 1/2$ and $||\tilde f||_{L^\infty} \leq
\eps_0$. Therefore, if 
\[ 
[\tilde u]_{1+\alpha,B_{1/2}} \leq C,
\]
by scaling back to $u$, we get
\[
[u]_{1+\alpha,B_{1/2}} \leq C \kappa^{-1} \le C (||u||_{L^\infty(B_1)}
+ ||f||_{L^\infty}^{1/(1+\gamma)})
\]
which concludes the proof.
\end{proof}

It is enough to prove that the solution $u$ of \eqref{eq:main} is
$C^{1,\alpha}$ at $0$ that is to say that there exists $C>0$ and
$\alpha$ (only depending on the ellipticity constants, dimension and
$\gamma$) such that for all $r \in (0,1)$, there exists $p \in \R^d$
such that
\begin{equation} \label{eq:pointC1alpha}
\osc_{B_r} (u - p \cdot x) \le C r^{1+\alpha}. 
\end{equation}

If we start with a function $u$ such that $\osc_{B_1} u \le 1$, we
already have the inequality for $r=1$ with $C=1$. In order to get such
a result for all $r \in (0,1)$, it is enough to find $\rho,\alpha \in (0,1)$
such that for all $k \in \N$ there exists $p_k \in \R^d$ such that
$$
\osc_{B_{\rho^k}} (u -p_k \cdot x) \le \rho^{k(1+\alpha)}.
$$
The inequality \eqref{eq:pointC1alpha} follows with $C = \rho^{-(1+\alpha)}$.

This is the reason why we consider $r_k = \rho^k$ and we aim at
proving by induction on $k \in \N$ the following
\begin{lemma}\label{lem:induction}
There exists $\rho, \alpha \in (0,1)$ and $\eps_0 \in [0,1]$ only
depending on $\gamma$, ellipticity constants and dimension such that,
as soon as a viscosity solution $u$ of \eqref{eq:main} with
$||f||_{L^\infty} \leq \eps_0$ satisfies $\osc_{B_1} u \leq 1$, then
for all $k \in \N$, there exists $p_k \in \R^d$ such that
\begin{equation}\label{eq:induction}
\osc_{B_{r_k}} (u - p_k \cdot x) \le r_k^{1+\alpha}. 
\end{equation}
\end{lemma}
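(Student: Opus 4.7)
The plan is to prove Lemma~\ref{lem:induction} by induction on $k$, the inductive step consisting of a one-step rescaling coupled with an \emph{improvement of flatness} lemma to be established in Section~4. The base case $k=0$ is immediate: choosing $p_0=0$, the hypothesis $\osc_{B_1} u \le 1 = r_0^{1+\alpha}$ is exactly \eqref{eq:induction}.

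For the inductive step, suppose \eqref{eq:induction} holds at scale $k$, and introduce the rescaling
\[
v(x) := r_k^{-(1+\alpha)}\bigl(u(r_k x) - p_k \cdot (r_k x)\bigr), \qquad x \in B_1,
\]
which satisfies $\osc_{B_1} v \le 1$ by the inductive hypothesis. A direct chain-rule computation gives $\nabla u(r_k x) = r_k^{\alpha} \nabla v(x) + p_k$ and $D^2 u(r_k x) = r_k^{\alpha-1} D^2 v(x)$. Plugging these into \eqref{eq:main}, multiplying through by $r_k^{1-\alpha}$, and factoring $r_k^{\alpha\gamma}$ out of the gradient term, one obtains in $B_1$
\[
|\nabla v + q_k|^{\gamma}\, F_k(D^2 v) = \tilde f(x),
\]
where $q_k := r_k^{-\alpha} p_k$, the operator $F_k(X) := r_k^{1-\alpha} F(r_k^{\alpha-1} X)$ has the same ellipticity constants as $F$ (by the observation in Section~2), and $\tilde f(x) := r_k^{1-\alpha(1+\gamma)} f(r_k x)$. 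The crucial point is that provided $\alpha \le 1/(1+\gamma)$, the exponent $1-\alpha(1+\gamma)$ is nonnegative, and hence $\|\tilde f\|_{L^\infty(B_1)} \le \eps_0$ is preserved across \emph{all} scales.

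We now appeal to the improvement of flatness lemma: there exist $\rho, \alpha \in (0,1)$ with $\alpha \le 1/(1+\gamma)$ and $\eps_0 \in (0,1]$, depending only on $\gamma$, the ellipticity constants and dimension, such that whenever $w$ is a viscosity solution in $B_1$ of an equation $|\nabla w + q|^\gamma G(D^2 w) = g$ with $G$ uniformly elliptic with the fixed constants, $q \in \R^d$ \emph{arbitrary}, $\|g\|_\infty \le \eps_0$ and $\osc_{B_1} w \le 1$, then there exists $q' \in \R^d$ with $\osc_{B_\rho}(w - q'\cdot x) \le \rho^{1+\alpha}$. Applying this to $v$ and setting $p_{k+1} := p_k + r_k^{\alpha} q'$, the identity $u(r_k x) - p_{k+1}\cdot(r_k x) = r_k^{1+\alpha}(v(x) - q'\cdot x)$ yields $\osc_{B_{r_{k+1}}}(u - p_{k+1}\cdot x) \le r_k^{1+\alpha}\rho^{1+\alpha} = r_{k+1}^{1+\alpha}$, closing the induction.

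The entire difficulty is thereby transferred to the improvement of flatness lemma. The main obstruction there is that the slope $q_k = r_k^{-\alpha} p_k$ appearing in the rescaled equation admits no uniform bound as $k \to \infty$, so one must produce a modulus of continuity for $v$ independent of $q \in \R^d$. As announced in the introduction, the strategy is to split into two regimes, using Ishii--Lions for large $|q|$ (Lipschitz estimate) and the Harnack-based H\"older estimate of \cite{i11} for small $|q|$, and then run a compactness argument whose limits solve $F(D^2 w)=0$ and thereby inherit the interior $C^{1,\alpha_0}$ estimate recalled in Section~2. The exponent $\alpha$ is finally chosen strictly smaller than $\alpha_0$ and $\rho$ small in terms of the $C^{1,\alpha_0}$ constant, compatibly with the constraint $\alpha \le 1/(1+\gamma)$ imposed above.
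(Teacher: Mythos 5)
Your proof is correct and follows essentially the same route as the paper: the same rescaling $v(x)=r_k^{-(1+\alpha)}(u(r_kx)-p_k\cdot r_kx)$, the same verification that the rescaled operator keeps its ellipticity constants and that $\|\tilde f\|_\infty\le\eps_0$ persists when $\alpha\le 1/(1+\gamma)$, and the same deferral of all the substance to the improvement of flatness lemma. The only cosmetic difference is that you state that lemma with conclusion $\rho^{1+\alpha}$, whereas the paper states it with $\tfrac12\rho$ and then chooses $\alpha$ so that $\rho^\alpha>\tfrac12$; these are interchangeable.
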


The choice of $\rho$ depends on the $C^{1,\alpha_0}$ estimates for
$F(D^2u)=0$. Precisely, since we assume that any viscosity solution $u$ of
$F(D^2u )=0$ in $B_1$ is $C^{1,\alpha_0}$, it is in particular
$C^{1,\alpha_0}$ at $0$, that is to say there exists $C_0>0$ such
that for all $r \in (0,1)$, there exists $p \in \R^d$ such that
$$
\osc_{B_r} (u- p \cdot x) \le C_0 r^{1+\alpha_0}.
$$ 
We then pick $\rho \in (0,2^{-\gamma-1})$ such that 
\begin{equation}\label{eq:rho}
C_0 \rho^{\alpha_0} \le \frac14.
\end{equation}
Given a solution $u$ of $F=0$ in $B_1$, we also pick $p_\rho=p_\rho (u)$ such that
\begin{equation}\label{eq:p-rho}
\osc_{B_\rho} (u- p_\rho \cdot x) \le \frac14 \rho.
\end{equation}

\subsection{Reduction to the \emph{improvement of flatness} lemma}

In order to prove Lemma~\ref{lem:induction}, we prove an
\emph{improvement of flatness} lemma; it is the core of the paper. It
basically says that if $p\cdot x + u$ solves \eqref{eq:main} in $B_1$
and the oscillation of $u$ in $B_1$ is less than $1$, say, then the
function $u$ can be approximated by a linear function in a smaller
ball with an error that is less than the radius of the ball. We make
this statement rigourous and quantitative now.
\begin{lemma}[Improvement of flatness lemma]\label{lem:core}
There exists $\eps_0 \in [0,1]$ and $\rho \in (0,1)$ only depending on
$\gamma$, ellipticity constants and dimension such that, for any $p
\in \R^d$ and any  viscosity solution $u$ of
\begin{equation}\label{eq:rescaled}
|p+ \nabla u|^\gamma F(D^2u ) = f \qquad \text{ in } B_1
\end{equation}
such that $\osc_{B_1} u \le 1$ and $\|f\|_{L^\infty(B_1)} \le
\eps_0$,  there exists $p' \in \R^d$ such that 
$$ \osc_{B_\rho} ( u - p'\cdot x) \le \frac12 \rho. $$
\end{lemma}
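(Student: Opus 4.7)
I proceed by contradiction and compactness, with $\rho$ fixed as in \eqref{eq:rho}. Suppose the conclusion fails; then there exist sequences $\eps_n \to 0$, $p_n \in \R^d$, $f_n$ with $\|f_n\|_{L^\infty(B_1)} \le \eps_n$, and viscosity solutions $u_n$ of
$$|p_n + \nabla u_n|^\gamma F(D^2 u_n) = f_n \qquad \text{in } B_1$$
with $\osc_{B_1} u_n \le 1$, such that for every $p' \in \R^d$, $\osc_{B_\rho}(u_n - p'\cdot x) > \tfrac12 \rho$. Subtracting constants, I may assume $u_n(0) = 0$, so that $\|u_n\|_{L^\infty(B_1)} \le 1$.

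\textbf{Step 1: a uniform modulus of continuity.} The central difficulty, highlighted in the introduction, is that $u_n$ does not satisfy any PDE independent of $p_n$. I handle two complementary regimes. When $|p_n|$ remains bounded, the equation retains a controlled structure of the type considered in \cite{i11}, and one extracts a uniform interior H\"older modulus of continuity for $(u_n)$ from the Harnack inequality proved there. When $|p_n|\to\infty$, the degenerate factor $|p_n+\nabla u_n|^\gamma$ is bounded below by a positive constant at any point where $|\nabla u_n|$ is not too large, so the equation is effectively uniformly elliptic with fixed ellipticity constants; the Ishii--Lions doubling-variable method then yields a uniform interior Lipschitz estimate. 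In both regimes, Arzel\`a--Ascoli (after extraction of a subsequence) gives uniform convergence $u_n\to u_\infty$ on compact subsets of $B_1$, with $\|u_\infty\|_{L^\infty(B_1)}\le 1$.

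\textbf{Step 2: passing to the limit.} I identify the equation satisfied by $u_\infty$. If, along a subsequence, $p_n \to p_\infty \in \R^d$, then standard stability of viscosity solutions (with $f_n\to 0$ uniformly) shows $u_\infty$ is a viscosity solution of $|p_\infty+\nabla u_\infty|^\gamma F(D^2 u_\infty)=0$ in $B_1$. Setting $v(x)=u_\infty(x)+p_\infty\cdot x$, this becomes $|\nabla v|^\gamma F(D^2 v)=0$, and the technical lemma announced in Section~5 promotes $v$, hence $u_\infty$, to a viscosity solution of $F(D^2 u_\infty)=0$ in $B_1$. If instead $|p_n|\to\infty$, I divide the equation by $|p_n|^\gamma$ and pass to the limit to obtain $F(D^2 u_\infty)=0$ directly.

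\textbf{Step 3: contradiction.} Since $u_\infty$ solves $F(D^2 u_\infty)=0$ in $B_1$ with $\|u_\infty\|_{L^\infty(B_1)}\le 1$, the $C^{1,\alpha_0}$ estimate at the origin combined with the choice of $\rho$ in \eqref{eq:rho} produces, exactly as in \eqref{eq:p-rho}, a slope $p_\rho \in \R^d$ with $\osc_{B_\rho}(u_\infty - p_\rho \cdot x) \le \tfrac14 \rho$. Uniform convergence on $\bar B_\rho$ then gives, for $n$ large, $\osc_{B_\rho}(u_n - p_\rho \cdot x) \le \tfrac12 \rho$, contradicting the hypothesis. \textbf{Main obstacle.} The main hurdle is clearly Step~1: securing a modulus of continuity uniform in $p_n$. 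The Ishii--Lions portion in particular demands a careful choice of doubling-variable test function and of the threshold demarcating ``large'' slopes, so that at the maximizer the degenerate factor is safely bounded below and the Pucci comparison of the second-order terms forces the desired Lipschitz bound; one must also arrange the overlap with the small-slope Harnack regime consistently.
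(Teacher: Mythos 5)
Your proposal is correct and follows essentially the same route as the paper: a compactness/contradiction argument whose key ingredient is a modulus of continuity uniform in $p$ (Lipschitz via Ishii--Lions for large slopes, H\"older via the Harnack-type estimate of \cite{i11} for small slopes), followed by passage to the limit in the two regimes of $p_n$, the reduction to $F(D^2 u_\infty)=0$ via Lemma~\ref{lem:visc}, and the contradiction with the $C^{1,\alpha_0}$ interior estimate through the choice of $\rho$ in \eqref{eq:rho}. The paper isolates your Step~1 as a separate equicontinuity lemma (Lemma~\ref{lem:equicont}), but the substance is identical.
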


It is important to remark that the choice of $\rho$ and $\eps_0$ works
for all vectors $p$ in the previous Lemma. No constant depends on $p$.

We now explain how to derive Lemma~\ref{lem:induction} from
Lemma~\ref{lem:core}.
\begin{proof}[Proof of Lemma~\ref{lem:induction}.]
For $k=0$, we simply choose $p_0=0$ and \eqref{eq:induction} is
guaranteed by the assumption $\osc u \leq 1$.

We choose $\alpha>0$ small such that $\rho^\alpha > 1/2$.

We assume now that $k \ge 0$ and that we constructed already $p_k \in
\R^d$ such that \eqref{eq:induction} holds true. We then consider for
$x \in B_1$,
$$
u_k (x) = r_k^{-1-\alpha} [u(r_kx) - p_k \cdot (r_k x)].
$$
The vector $p_k$ is such that $\osc_{B_1} u_k \le 1$. Moreover, $u_k$ satisfies 
$$
|r_k^{-\alpha} p_k + D u_k|^\gamma \, r^{1-\alpha} F(r^{\alpha-1} D^2 u_k) = f_k(x)$$
 with $f_k (x) = r_k^{1-\alpha(1+\gamma)} f(r_k x)$. In particular, $\|f_k\|_{L^\infty
  (B_1)} \le \eps_0$ as long as $\alpha < 1/(1+\gamma)$.
  
Notice that the function $r^{1-\alpha} F(r^{\alpha-1} X)$ has the same
ellipticity constants as $F(X)$, therefore the $C^{1,\alpha_0}$
estimates are conserved by this scaling.
  
Now we apply Lemma~\ref{lem:core} and get $q_{k+1}$ such that 
$$
\osc_{B_\rho} (u_k - q_{k+1}\cdot x) \le \frac12 \rho
$$
Because of our choice of $\alpha$, we then obtain $p_{k+1}$ such that
$$
\osc_{B_{r_{k+1}}} (u - p_{k+1}\cdot x) \le r_k^{1+\alpha} \frac 12
\rho \le  r_{k+1}^{1+\alpha}.
$$ 
The proof is now complete.
\end{proof}

\section{Equi-continuity of rescaled solutions}

The proof of Lemma~\ref{lem:core} relies on the following lemma in
which the modulus of continuity of solutions of \eqref{eq:rescaled} is
controlled.
\begin{lemma}[Modulus of continuity independent of $p$]\label{lem:equicont}
For all $r>0$, there exist $\beta \in (0,1)$ and $C >0$ only
depending on ellipticity constants, dimension, $\gamma$ and $r$ and such
that for all viscosity solution $u$ of \eqref{eq:rescaled} with $\osc_{B_1}
u \leq 1$ and $||f||_{L^\infty(B_1)} \leq \eps_0 < 1$ satisfies
\begin{equation}\label{eq:equicont}
[u]_{\beta,B_r} \le C.
\end{equation}
In particular, the modulus of continuity of $u$ is controlled
independently of $p$. 
\end{lemma}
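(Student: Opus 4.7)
The plan is to dichotomize on the size of $|p|$ using a threshold $M > 0$ to be chosen in terms of $\gamma$, the ellipticity constants and dimension only. When $|p| \le M$, the function $v(x) := u(x) + p\cdot x$ is a viscosity solution of $|\nabla v|^\gamma F(D^2 v) = f$ in $B_1$ with $\osc_{B_1} v \le 1 + 2M$, so the interior H\"older estimate for this degenerate equation, proved in \cite{i11}, gives $\beta_0 \in (0,1)$ and a constant $C_0$ depending on $r, M, \gamma, \lambda, \Lambda, d$ with $[v]_{\beta_0, B_r} \le C_0$. Since $u = v - p\cdot x$, this transfers to $[u]_{\beta_0, B_r} \le C_0 + M(2r)^{1-\beta_0}$.

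When $|p| > M$, I would carry out an Ishii--Lions doubling-of-variables argument to produce, in fact, a local Lipschitz bound on $B_r$ that is independent of $p$. Fix $r' \in (r,1)$, $x_0 \in B_r$, and $\tau \in (0,1)$ small. Consider on $B_{r'} \times B_{r'}$
$$ \Phi(x,y) = u(x) - u(y) - L\,\omega(|x-y|) - \sigma \bigl(|x-x_0|^2 + |y-x_0|^2\bigr), $$
with $\omega(t) = t - t^{1+\tau}/(1+\tau)$ on $[0, t_0]$ (extended concavely and smoothly beyond), $\sigma$ chosen just large enough, using $\osc u \le 1$, that $\sup \Phi$ cannot be attained on the boundary, and $L$ to be taken large. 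The aim is to show $\sup \Phi \le 0$, which yields Lipschitz regularity on $B_r$ with constant controlled by $L$. Assume for contradiction that $\sup\Phi > 0$ is attained at an interior point $(\bar x, \bar y)$ with $\bar x \ne \bar y$. The Jensen--Ishii lemma produces matrices $X, Y \in \S_d$ and slopes $\zeta_1 \in J^{2,+}u(\bar x)$, $\zeta_2 \in J^{2,-}u(\bar y)$ of the form $L\omega'(|\bar x - \bar y|)(\bar x - \bar y)/|\bar x - \bar y|$ plus an $O(\sigma)$ correction, so that $|\zeta_i| \le L + O(\sigma)$. Choosing $M = 4L$ then ensures $|p + \zeta_i| \ge |p|/2 \ge 2L$, and the two viscosity inequalities for \eqref{eq:rescaled}, after dividing by these bounded-below quantities, combine into
$$ F(Y) - F(X) \ge -C\eps_0 L^{-\gamma}. $$
On the other hand, the Jensen--Ishii matrix inequality shows that $X - Y$ has an eigenvalue of order $L\omega''(|\bar x - \bar y|) = -L\tau |\bar x - \bar y|^{\tau-1}$ in the direction $(\bar x - \bar y)/|\bar x - \bar y|$, with the remaining eigenvalues controlled by $O(\sigma)$; uniform ellipticity via the Pucci operator $P^+$ then yields
$$ F(Y) - F(X) \le P^+(Y - X) \le -\lambda\tau L (2r')^{\tau-1} + C\Lambda \sigma. $$
Picking first $\sigma$ small (depending on $r'-r$) and then $L$ large enough contradicts the previous lower bound.

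Combining both cases yields \eqref{eq:equicont} with $\beta = \beta_0$, since a Lipschitz function is in particular $\beta_0$-H\"older on any bounded set. The main obstacle is the large-$|p|$ case: one has to arrange the parameters $\tau, \sigma, L$ and $M = 4L$ in the right order so that (i) the slope $\zeta_i$ at the doubling point stays small compared to $|p|$, reinstating effective uniform ellipticity via $|p+\zeta_i|^{-\gamma}$ bounded; and (ii) the Pucci upper bound, linear in $L$, dominates the $O(\sigma)$ localization error and the $L^{-\gamma}$ correction from the right-hand side. The small-$|p|$ case reduces directly to \cite{i11} after the affine shift by $p\cdot x$.
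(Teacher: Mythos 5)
Your proposal is correct and follows essentially the same route as the paper: a dichotomy on $|p|$, with the small-slope case reduced to the H\"older estimate of \cite{i11} (you do this by the affine shift $v=u+p\cdot x$, the paper by observing that $u$ satisfies Pucci inequalities wherever $|\nabla u|$ is large, but both rest on the same result) and the large-slope case handled by an Ishii--Lions doubling argument with a strictly concave modulus, choosing the Lipschitz constant $L$ first and the slope threshold afterwards, exactly as in the paper. The only slip is calling $\sigma$ ``small'' at the end --- as you correctly note earlier, the localization parameter must be taken \emph{large enough} (of order $(r'-r)^{-2}$) to confine the maximum to the interior; it is merely fixed before $L$ is sent large, which is all the argument needs.
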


\subsection{Proof of Lemma~\ref{lem:equicont}}

This lemma is a consequence of the two following ones.
\begin{lemma}[Lipschitz estimate for large $p$'s]\label{lem:equilip}
Assume $u$ solves \eqref{eq:rescaled} with $\osc_{B_1} u \leq 1$ and
$||f||_{L^\infty(B_1)} \leq \eps_0 < 1$. If $|p| \ge 1/a_0$, with
$a_0=a_0(\lambda,\Lambda, d, \gamma, r)$, 
then any viscosity solution $u$ of \eqref{eq:rescaled} is Lipschitz continuous in $B_r$ and
\begin{equation}\label{eq:estim-lip}
[u]_{1,B_r} \le C 
\end{equation}
where $C=C(\lambda,\Lambda,\gamma,d,r)$.
\end{lemma}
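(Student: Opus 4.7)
The plan is to apply the Ishii--Lions doubling-of-variables method in a regime where the large slope $p$ dominates all correction terms, so that the equation behaves essentially uniformly elliptic after dividing by $|p+\nabla u|^\gamma$. Fix $r'\in(r,1)$. For each base point $x_0\in B_r$, consider on $\bar B_{r'}\times\bar B_{r'}$ the auxiliary function
$$
\Phi(x,y) = u(x)-u(y) - L_1\,\omega(|x-y|) - \tfrac{L_2}{2}\bigl(|x-x_0|^2+|y-x_0|^2\bigr),
$$
where $\omega(s)=s-\omega_0 s^{1+\tau}$ is a strictly concave modulus on $[0,2r']$ (with $\tau\in(0,1)$ and $\omega_0>0$ small enough that $\omega'>0$ there). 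Choose $L_2$ large compared with $1/(r'-r)^2$ so that any positive maximum of $\Phi$ is forced into the interior; the constant $L_1$ is to play the role of the prospective Lipschitz constant. Since $\Phi(x,x)\le 0$, the goal is to prove $\Phi\le 0$ on the whole bidisc: specialising $y=x_0$ and using $\omega(s)\le s$ then yields $[u]_{1,B_r}\le L_1+L_2 r'$.

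Assume for contradiction that $\Phi$ has a positive interior maximum at $(\bar x,\bar y)$ with $\bar x\neq\bar y$, and set $s=|\bar x-\bar y|$, $e=(\bar x-\bar y)/s$. The Crandall--Ishii theorem of sums gives, for each $\eta>0$, matrices $X,Y\in\S_d$ and vectors
$$
q_1 = L_1\omega'(s)\,e + L_2(\bar x-x_0),\qquad q_2 = L_1\omega'(s)\,e - L_2(\bar y-x_0),
$$
with $(q_1,X)\in\bar J^{2,+}u(\bar x)$, $(q_2,Y)\in\bar J^{2,-}u(\bar y)$, and
$$
\begin{pmatrix}X&0\\0&-Y\end{pmatrix}\le A+\eta A^2,\qquad A:=D^2\phi(\bar x,\bar y).
$$
Choose $a_0$ so small that $|p|\ge 1/a_0$ forces $|p|\ge 4(L_1+L_2 r')$. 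Then $|p+q_i|\in[|p|/2,2|p|]$, and the sub/supersolution inequalities become
$$
F(X)\le \frac{C_\gamma\|f\|_\infty}{|p|^\gamma},\qquad F(Y)\ge -\frac{C_\gamma\|f\|_\infty}{|p|^\gamma}, \qquad F(X)-F(Y)\le \frac{2C_\gamma\|f\|_\infty}{|p|^\gamma}.
$$

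A direct block computation of $A$ shows that testing the matrix inequality against $\binom{v}{v}$ yields $\langle(X-Y)v,v\rangle\le 2L_2|v|^2+O(\eta)$, while testing against $\binom{e}{-e}$ yields $\langle(X-Y)e,e\rangle\le 4L_1\omega''(s)+2L_2+O(\eta)$. Thus every eigenvalue of $X-Y$ is bounded above by $2L_2+O(\eta)$, whereas its smallest eigenvalue is bounded above by $4L_1\omega''(s)+O(L_2+\eta)$, which is very negative since $\omega''<0$. Combining with the Pucci lower bound $F(X)-F(Y)\ge P^-(X-Y)$ gives
$$
F(X)-F(Y)\ge 4\lambda L_1|\omega''(s)|-2d\Lambda L_2-O(\eta).
$$
Since $|\omega''(s)|\ge \tau(1+\tau)\omega_0(2r')^{\tau-1}>0$ uniformly on $(0,2r']$, choosing $L_1$ large compared with $L_2$ and with $2C_\gamma\|f\|_\infty/|p|^\gamma$, and then $\eta$ small, contradicts the previous upper bound. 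Hence $\Phi\le 0$, and the Lipschitz estimate follows by letting $y=x_0$ vary.

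The real technical obstacle is threading the hierarchy of constants consistently: the Pucci step needs $L_1\gg L_2$; the interior-maximum argument needs $L_2$ large relative to $1/(r'-r)^2$; and the ``essentially uniformly elliptic'' reduction then forces $|p|\gg L_1+L_2 r'$, which dictates $a_0$. Checking that $L_1,L_2,\omega_0,\tau,\eta$ can all be fixed as explicit functions of $\lambda,\Lambda,d,\gamma,r$ alone (with no dependence on $p$) and that all these inequalities hold simultaneously is the main accounting work in the proof.
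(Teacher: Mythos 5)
Your proposal is correct and follows essentially the same route as the paper: the Ishii--Lions doubling with a concave modulus plus a quadratic localization term, the one-very-negative-eigenvalue estimate fed into $P^-$, and the same hierarchy of constants (first $L_2$ from localization, then $L_1$ from the Pucci step, and only then $a_0$ so that $|p|$ dominates the test-function gradients). The only cosmetic differences are the exponent $1+\tau$ in place of the paper's $3/2$ and working with $|p+q_i|$ directly rather than first dividing the equation by $|p|^\gamma$.
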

\begin{lemma}[H\"older estimate for small $p$'s]\label{lem:equiholder}
Assume $u$ solves \eqref{eq:rescaled} with $\osc_{B_1} u \leq 1$ and
$||f||_{L^\infty(B_1)} \leq \eps_0 < 1$. If $|p| \le 1/a_0$, then $u$ is
$\beta$-H\"older continuous in $B_r$ and 
$$
[u]_{\beta,B_r} \le C
$$ 
where $\beta=\beta(\lambda,\Lambda,d,r,a_0)$ and $C=C(\lambda,\Lambda,d,r,a_0)$.
\end{lemma}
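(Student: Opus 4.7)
The plan is to reduce the rescaled equation \eqref{eq:rescaled} back to the original form \eqref{eq:main} by a simple linear change of variables, and then invoke the H\"older estimate from \cite{i11} (cited in the introduction as being derived via Krylov--Safonov--Caffarelli type arguments from the Harnack inequality established there for equations of the form \eqref{eq:main}).

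More precisely, set $v(x) = u(x) + p\cdot x$, so that $\nabla v = p + \nabla u$ and $D^2 v = D^2 u$. Then $v$ is a viscosity solution of
\[
|\nabla v|^\gamma F(D^2 v) = f \qquad \text{in } B_1,
\]
which is exactly equation \eqref{eq:main}. Since $\osc_{B_1} u \le 1$, we may assume (after subtracting a harmless constant from $u$, which does not affect either the equation or the semi-norm we want to bound) that $\|u\|_{L^\infty(B_1)} \le 1$. Combined with $|p|\le 1/a_0$, this gives $\|v\|_{L^\infty(B_1)} \le 1 + 1/a_0$. In particular, both $\|v\|_\infty$ and $\|f\|_\infty \le \eps_0$ are controlled by quantities depending only on $\lambda,\Lambda,d,\gamma,a_0$.

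Applying the interior H\"older estimate from \cite{i11} for equation \eqref{eq:main} then yields $\beta \in (0,1)$ and $C>0$ depending only on $\lambda,\Lambda,d,\gamma,r,a_0$ such that
\[
[v]_{\beta,B_r} \le C.
\]
Finally, $u(x) = v(x) - p\cdot x$, and the linear term contributes trivially to the H\"older semi-norm: for $x,y\in B_r$ with $x\neq y$,
\[
\frac{|p\cdot(x-y)|}{|x-y|^\beta} \le \frac{1}{a_0}\,|x-y|^{1-\beta} \le \frac{(2r)^{1-\beta}}{a_0}.
\]
Consequently $[u]_{\beta,B_r} \le [v]_{\beta,B_r} + (2r)^{1-\beta}/a_0$, which is the claimed bound.

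There is no serious obstacle in this lemma; the entire H\"older regularity work has already been carried out in \cite{i11} for equation \eqref{eq:main}, and the only role of the smallness assumption $|p|\le 1/a_0$ is to ensure that the translated function $v = u+p\cdot x$ still has bounded sup-norm with a constant independent of $p$, so that the H\"older estimate of \cite{i11} can be applied uniformly. The non-trivial companion result is Lemma~\ref{lem:equilip}, for large $|p|$, where the gradient term is non-degenerate and the Ishii--Lions method produces a uniform Lipschitz estimate; together the two lemmas cover all values of $p$ and yield Lemma~\ref{lem:equicont}.
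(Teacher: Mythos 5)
Your proof is correct, and it reaches the same external input (the H\"older estimate of \cite{i11}) by a slightly different route than the paper. The paper does not change the unknown: it keeps $u$, writes the equation as $G(\nabla u,D^2u)=f$ with $G(q,X)=|p+q|^\gamma F(X)$, and observes that whenever $|q|\ge 2a_0^{-1}$ one has $|p+q|\ge a_0^{-1}$, so $u$ satisfies the Pucci extremal inequalities $P^+(D^2u)+a_0^\gamma|f|\ge 0$ and $P^-(D^2u)-a_0^\gamma|f|\le 0$ restricted to test functions with large gradient; this is precisely the structural hypothesis under which \cite{i11} proves H\"older regularity, with a constant of the form $C_1(\osc_{B_1}u+\max(2a_0^{-1},\|f\|_{L^n}))$. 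There the smallness of $|p|$ enters through the gradient threshold $2a_0^{-1}$. You instead absorb $p$ into the unknown via $v=u+p\cdot x$, reduce to the original equation \eqref{eq:main}, apply the \cite{i11} estimate to $v$, and subtract the linear part; for you the smallness of $|p|$ enters through the bound $\|v\|_{L^\infty}\le 1+1/a_0$ and through the Lipschitz correction $|p|(2r)^{1-\beta}$. Both arguments are sound and both use $|p|\le 1/a_0$ in an essential way. The paper's version has the minor advantage of invoking \cite{i11} in exactly the form it is stated there (degenerate ellipticity only above a gradient threshold, with the threshold appearing explicitly in the constant), while yours requires the corollary of that result for equation \eqref{eq:main} with an $L^\infty$-dependent constant; since that corollary is indeed available in \cite{i11}, there is no gap.
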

We now turn to the proof of these two lemmas.
\begin{proof}[Proof of Lemma~\ref{lem:equilip}]
We rewrite \eqref{eq:rescaled} as 
$$
|e + a Du|^\gamma F(D^2 u ) = \tilde f
$$ 
where $e = p / |p|$ and $a= 1 / |p| \in [0,a_0]$ and 
$$
\tilde f= |p|^{-\gamma}f.
$$ 
Remark that 
$$
\| \tilde f \|_{L^\infty(B_1)} \le a_0^\gamma \eps_0. 
$$ We use viscosity solution techniques first introduced in
\cite{il90}. For all $x_0 \in B_{r/2}$, we look for $L_1>0$ and
$L_2>0$ such that
$$ 
M = \sup_{x,y \in B_r} u(x)-u(y) -L_1 \omega (|x-y|) - L_2
|x-x_0|^2 - L_2 |y-x_0|^2 \le 0
$$ 
where $\omega (s) = s - \omega_0 s^{\frac32}$ if $s \le
s_0:=(2/3\omega_0)^2$ and $\omega(s)=\omega(s_0)$ if $s \ge s_0$. We
 choose $\omega_0$ such that $s_0 \ge 1$. We notice that if we
proved such an inequality, the Lipschitz constant is bounded from
above by any $L>L_1$.

We argue by contradiction by assuming that $M>0$. If $(x,y) \in
\bar{B}_r \times \bar{B}_r $ denotes a point where the maximum is
reached (recall that $u$ is continuous and its oscillation is
bounded), we conclude that
$$
L_1 \omega (|x-y|) + L_2 |x-x_0|^2 +L_2 |y-x_0|^2\le \osc_{B_1} u \le 1. 
$$ 

We choose $L_2=(4/r)^2$, so that $|x-x_0| \le \frac{r}4$ and $|y-x_0|\le
\frac{r}4$. With this choice, we force the points $x$ and $y$ where the
supremum is achieved to be in $B_{r}$. Remark also that the supremum
cannot be reached at $(x,y)$ with $x=y$, otherwise $M \le 0$. Hence,
we can write two viscosity inequalities.

Before doing so, we compute the gradient of the test-function for $u$
with respect to $x$ and $y$ at $(x,y)$
$$
q_x = q+ 2 L_2 x \qquad \text{and}\qquad q_y = q - 2 L_2 y
$$ where $q=L_1 \omega'(|\delta|) \hat \delta$, $\delta = x-y$ and
$\hat \delta = \delta / |\delta|$.  To get appropriate viscosity
inequalities, we shall use Jensen-Ishii's Lemma  in order to
construct a limiting sub-jet $(q_x,X)$ of $u$ at $x$ and a limiting
super-jet $(q_y,Y)$ of $u$ at $y$ such that the following $2n \times
2n$ matrix inequality holds for all $\iota >0$ small enough (depending
on the norm of $Z$):
\[
\begin{pmatrix}
X & 0 \\
0 & -Y 
\end{pmatrix}
\leq 
\begin{pmatrix}
Z  & -Z \\
-Z & Z
\end{pmatrix} + (2L_2+\iota) I
\]
where $Z=L_1 D^2 (\omega(|\cdot|)) (x-y)$.  We refer the reader to
\cite{bci11,bcci11} for details. Applying the previous matrix
inequality as a quadratic form inequality to vectors of the form
$(v,v)$ we obtain
\begin{equation} \label{eq:noneVeryPositive}
 \langle (X-Y) v,v \rangle \leq (4 L_2+\iota) |v|^2.
\end{equation}
Therefore $X-Y \leq (4L_2+\iota) I$, or equivalently, all eigenvalues of $X-Y$
are less than $4L_2+\iota$. On the other hand, applying now the particular
vector $(\hat \delta, -\hat \delta)$, we obtain
\begin{equation} \label{eq:oneVeryNegative}
 \langle (X-Y) \hat \delta, \hat \delta \rangle \leq (4 L_2 +\iota -
 6\omega_0 L_1 |x-y|^{-1/2} ) |\hat \delta|^2 \leq (4 L_2 +\iota- 3\sqrt 2
 \omega_0 L_1 ) |\hat \delta|^2.
\end{equation}
Thus, at least one eigenvalue of $X-Y$ is less than $(4 L_2 +\iota-
3\omega_0 \sqrt{2} L_1 )$ (which will be a negative number).  We next
consider the minimal Pucci operator $P^-$. We recall that $-P^-(A)$
equals $\lambda$ times the sum of all negative eigenvalues of $A$
plus $\Lambda$ times the sum of all positive eigenvalues. Therefore,
from \eqref{eq:noneVeryPositive} and \eqref{eq:oneVeryNegative}, we
obtain
\begin{align*}
 P^-(X-Y) &\ge -\lambda (4L_2 +\iota- 3 \sqrt 2 \omega_0 L_1) - \Lambda (d-1) (4 L_2 +\iota)\\
 &\ge -(\lambda+(d-1)\Lambda)(4 L_2+\iota) + 3 \sqrt 2 \omega_0 \lambda L_1.
\end{align*}

We now write the two viscosity inequalities and we combine them in
order to get a contradiction.
\begin{align*}
|e+a q_x|^\gamma F(X) & \le \tilde{f}(x) \\ 
|e+a q_y|^\gamma F(Y) & \ge \tilde{f}(y).
\end{align*}
We will choose $a_0$ small enough depending on $L_1$ and $L_2$ so that
$|a q_x| \le 1/2$ and $|a q_y| \le 1/2$. The constant $L_1$ will be
chosen later and its value does not depend on this choice of $a_0$. In
particular, we have
$$
\frac12 \le \min (|e+aq_x|, |e+aq_y|).
$$
We now use that $F$ is uniformly elliptic to write
$$
F(X)  \ge F(Y) + P^-(X-Y).  
$$ Combining the previous displayed inequalities and recalling
$||f||_{L^\infty} \leq \eps_0$ yields
$$ 
3 \sqrt{2} \omega_0 \lambda L_1 \le  (\lambda + \Lambda (d-1)) (4L_2+\iota)
+ 2^{\gamma+1} \eps_0.
$$
Choosing $L_1$ large enough depending on $\lambda$, $\Lambda$, $d$,
$\gamma$, and the previous choice of $L_1$ (which depends on $r$
only), we obtain a contradiction.

Note that this choice of $L_1$ does not depend on the previous choice
of $a_0$, so we should first choose $L_1$ large and then $a_0$ small.
The proof of the lemma is now complete.
\end{proof}

\begin{proof}[Proof of Lemma~\ref{lem:equiholder}]
The equation can be written as $G(Du,D^2u)=f$ with 
$$
G(q,X)=|p+q|^\gamma F(X).
$$
 In particular, if $|q| \ge 2 a_0^{-1}$ then
$|p+q|^\gamma \ge a_0^{-\gamma}$. In particular,
$$
\left.\begin{array}{r} G(q,X)=0 \\ |q|\ge 2a_0^{-1} \end{array}\right\}
\Rightarrow \left\{ \begin{array}{l}
 P^+ (D^2u)+ a_0^\gamma \frac{|f|}{A^\gamma} \ge 0 \\
 P^- (D^2u) - a_0^\gamma \frac{|f|}{A^\gamma} \le 0
\end{array}\right.
$$ where $P^\pm$ denote extremal Pucci's operators
associated with the ellipticity constants of $F$.  We know from
\cite{i11} that there exists $\beta_1 \in (0,1)$ and $C_1$ only
depending on $r$, dimension and ellipticity constants of $F$ such that
\begin{align*}
[u]_{\beta_1,B_r} & \le  C_1
\left(\osc_{B_1} u+ \max(2a_0^{-1},\|f\|_{L^n(B_1)})
\right) \\
& \le C_1 (1 + \max (2a_0^{-1}, \eps_0)). 
\end{align*} 
The proof of the lemma is now complete.
\end{proof}

\subsection{Proof of the \emph{improvement of flatness} Lemma}

With Lemma~\ref{lem:equicont} in hand, we can now turn to the proof of
Lemma~\ref{lem:core}.
\begin{proof}[Proof of Lemma~\ref{lem:core}]
We argue by contradiction and we assume that there exist sequences
$\eps_n \to 0$, $p_n \in \R^d$, $f_n$ such that
$\|f_n\|_{L^\infty (B_1)} \le \eps_n$, and $u_n$ satisfying
\eqref{eq:rescaled} with $(p,f)=(p_n,f_n)$ such that for all $p'
\in \R^d$,
$$
\osc_{B_\rho} (u_n - p' \cdot x) > \frac12 \rho.
$$
Remark that $f_n \to 0$ as $n \to \infty$. 

Thanks to Lemma~\ref{lem:equicont}, we can extract a subsequence of
$(u_n)_n$ converging locally uniformly in $B_1$ to a continuous function
$u_\infty$. Remark that we have in particular for all $p' \in \R^d$, 
\begin{equation}\label{osc:lim}
\osc_{B_\rho} (u_\infty - p' \cdot x) > \frac12 \rho.
\end{equation}
We are going to prove that $u_\infty$ satisfies $F(D^2 u_\infty)=0$ in
$B_1$. This will imply that there exists a vector $p_\rho$ such that
\eqref{eq:p-rho} holds true. This is the desired contradiction with
\eqref{osc:lim}.

To prove that $F(D^2 u_\infty)=0$ in $B_1$, we now distinguish two cases.

If we can extract a converging subsequence of $p_n$, then we also do
it for $u_n$ and we get at the limit
$$
|p_\infty + \nabla u_\infty |^\gamma F(D^2 u_\infty) = 0
\qquad \text{ in } B_1. 
$$ 
In particular, we have $F(D^2 u_\infty)=0$ in $B_1$ (see
Lemma~\ref{lem:visc} in the next subsection). 

If now we cannot extract a converging subsequence of $p_n$, then
$|p_n| \to \infty$ and in this case, we extract a converging
subsequence from $e_n = p_n/|p_n|$ and dividing the equation by $|p_n|$ we get at the limit
$$
|e_\infty + 0 \nabla u_\infty|^\gamma F(D^2 u_\infty)=0 \qquad \text{ in }
B_1
$$
for $e_\infty \neq 0$ so that we also have in this case $F(D^2
u_\infty)=0$ in $B_1$. The proof of the lemma is now complete.
\end{proof}

\section{Viscosity solutions of $\mathbf{|\nabla u|^{\gamma}F(D^2 u)=0}$}

In the previous subsection, we used the following lemma. 
\begin{lemma}\label{lem:visc}
Assume that $u$ is a viscosity solution of 
$$
|p + \nabla u |^\gamma F(D^2 u)= 0 \qquad \text{ in } B_1.
$$  Then $u$ is a viscosity solution of $F(D^2
u)=0$ in $B_1$.
\end{lemma}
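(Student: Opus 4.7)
The plan is to prove the subsolution statement; the supersolution case is dual (touching from below, using a positive eigenvector of $X$, and subtracting the quadratic perturbation instead of adding it). Let $\phi\in C^2$ touch $u$ from above at $x_0$, and set $q:=\nabla\phi(x_0)$, $X:=D^2\phi(x_0)$; the goal is $F(X)\le 0$. When $q\neq -p$ the subsolution inequality $|p+q|^\gamma F(X)\le 0$ has a positive weight and immediately gives $F(X)\le 0$. The delicate case is $q=-p$, where the weight vanishes. If moreover $X\ge 0$, uniform ellipticity and $F(0)=0$ give $F(X)\le F(0)-\lambda \tr X\le 0$ directly. Otherwise $X$ has a negative eigenvalue $\mu<0$ with unit eigenvector $e$.

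For parameters $0<\eps<|\mu|/2$ and $t>0$ small, I perturb by
\[
\psi_{\eps,t}(y) \;:=\; \phi(y) + \eps\,|y-x_0|^2 + t\, e\cdot(y-x_0),
\]
and pick $y_{\eps,t}$ a maximizer of $u-\psi_{\eps,t}$ on a fixed closed ball $\overline{B_r(x_0)}$ (existing by upper semicontinuity of $u$). Since $(u-\psi_{\eps,t})(x_0)=0$ this maximum is $\ge 0$, so $(u-\psi_{\eps,t})(y_{\eps,t})\ge 0$ combined with $u\le\phi$ near $x_0$ forces
\[
\eps\,|y_{\eps,t}-x_0|^2 \;+\; t\, e\cdot(y_{\eps,t}-x_0) \;\le\; 0.
\]
This yields at once $a_1:=e\cdot(y_{\eps,t}-x_0)\le 0$ and $|y_{\eps,t}-x_0|\le t/\eps$; in particular for $t$ sufficiently small (depending on $r,\eps$) the point $y_{\eps,t}$ is interior to $B_r(x_0)$ and tends to $x_0$ as $t\to 0$, so the subsolution property applies at $y_{\eps,t}$ (after adjusting $\psi_{\eps,t}$ by the constant $\max(u-\psi_{\eps,t})$, which does not affect its derivatives).

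The key computation uses the Taylor expansion $\nabla\phi(y_{\eps,t})=-p+X(y_{\eps,t}-x_0)+o(|y_{\eps,t}-x_0|)$ together with $\nabla\psi_{\eps,t}=\nabla\phi+2\eps(y-x_0)+te$ and $Xe=\mu e$:
\[
e\cdot\bigl(p+\nabla\psi_{\eps,t}(y_{\eps,t})\bigr) \;=\; (\mu+2\eps)\,a_1 \;+\; t \;+\; o(|y_{\eps,t}-x_0|).
\]
The first summand is nonnegative because $\mu+2\eps<0$ and $a_1\le 0$, while the bound $|y_{\eps,t}-x_0|\le t/\eps$ shows that the remainder is $o(t)$ as $t\to 0$ with $\eps$ fixed. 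Hence this projection is $\ge t/2>0$ for $t$ small, so $p+\nabla\psi_{\eps,t}(y_{\eps,t})\neq 0$, and the viscosity subsolution inequality reduces to $F(D^2\psi_{\eps,t}(y_{\eps,t}))\le 0$. Since $D^2\psi_{\eps,t}(y_{\eps,t})=D^2\phi(y_{\eps,t})+2\eps I\to X+2\eps I$ as $t\to 0$, the continuity of $F$ yields $F(X+2\eps I)\le 0$; letting finally $\eps\to 0$ gives $F(X)\le 0$.

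The main obstacle is exactly the degenerate case $q=-p$: the weight $|p+q|^\gamma$ annihilates all information about $F(X)$ at the touching point, and one must manufacture a nearby point whose jet gradient is provably not $-p$. The central trick is to perturb linearly in the direction of a negative eigenvector $e$ of $X$: the constraint from $u\le\phi$ forces $a_1\le 0$, and $\mu<0$ then makes the eigenvalue contribution $(\mu+2\eps)a_1\ge 0$ add constructively to the shift $t$ in the $e$-direction rather than cancel it. The auxiliary quadratic $\eps\,|y-x_0|^2$ serves only to give the sharp bound $|y_{\eps,t}-x_0|\le t/\eps$, which keeps the Taylor remainder $o(|y_{\eps,t}-x_0|)$ strictly below the useful linear term $t$.
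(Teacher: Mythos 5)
Your proof is correct, and while it belongs to the same family as the paper's argument --- both manufacture a nearby touching point at which the gradient of the perturbed test function is provably different from $-p$, so that the weight $|p+\nabla\psi|^\gamma$ can be divided out --- the execution is genuinely different. The paper first reduces to $p=0$, works with the supersolution property and a quadratic test function, and argues by contradiction: assuming $F(A)<0$ it perturbs by the \emph{nonsmooth} cone $\eps|P_S x|$, where $P_S$ is the projection onto the span of the eigenvectors of $A$ with nonnegative eigenvalues, and must separately rule out the sub-case $P_S x_0=0$ by writing $|P_S x|$ as an extremum of linear functions. Your perturbation $\eps|y-x_0|^2+t\,e\cdot(y-x_0)$ stays smooth, yields a direct (non-contradiction) argument, and replaces the eigenspace projection by a single eigenvector; the price is a two-parameter limit ($t\to0$ first, then $\eps\to0$) and a Taylor-remainder estimate, which your localization bound $|y_{\eps,t}-x_0|\le t/\eps$ handles correctly, since it turns the $o(|y_{\eps,t}-x_0|)$ error into $o(t)$ for fixed $\eps$. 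The individual steps all check out: the sign $a_1\le0$ forced by $u\le\phi$ at the maximizer, the resulting lower bound $e\cdot\bigl(p+\nabla\psi_{\eps,t}(y_{\eps,t})\bigr)\ge t/2$, the passage to $F(X+2\eps I)\le0$ and then to $F(X)\le0$ using that uniform ellipticity makes $F$ Lipschitz, and the separate direct treatment of the case $X\ge0$ (where no negative eigenvector is available but $F(X)\le -\lambda\tr X\le 0$ anyway), together with the dual supersolution version. What your route buys is the avoidance of nonsmooth test functions and of the degenerate sub-case analysis; what the paper's buys is a single limit $\eps\to0$ with no remainder estimates.
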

\begin{proof}
We reduce the problem to $p=0$ as follows. The function $v= u + p \cdot x$ satisfies 
$|\nabla v|^\gamma F(D^2 v)=0$ in $B_1$. If we proved the result for
$p=0$, we conclude that $F(D^2u) = F(D^2v) = 0$ in $B_1$. 

We now assume that $p=0$. We only prove the super-solution
property since the sub-solution property is very similar. 

Consider a test-function $\phi$ touching $u$ strictly from below at $x
\in B_1$. We assume for simplicity that $x=0$. Hence, we have,
$\phi(0)=u(0)=0$ and $\phi < u$ in $B_r \setminus \{0\}$ for some
$r>0$. We can assume without loss of generality that $\phi$ is
quadratic: $\phi (x) = \frac12 Ax\cdot x + b\cdot x$. If $b \neq 0$,
then we get the desired inequality: $F(A) \ge 0$.

If $b=0$, we  argue by contradiction by
assuming that $F(A) < 0$. Since $F$ is uniformly elliptic, this
implies that $A$ has a least one positive eigenvalue. Let $S$ be the
direct sum of eigensubspace corresponding to non-negative
eigenvalues. Let $P_S$ denote the orthogonal projection on $S$. We
then consider the following test function
$$
\psi (x) = \phi(x) + \eps |P_S x|. 
$$ Since $\phi<u$ in $B_r$, then $u - \psi$ reaches its minimum at $x_0$ in
$\bar{B}_r$ in the interior of the ball for $\eps$ small enough.

We claim first that $P_S x_0 \neq 0$. Indeed, if this is not true, we use the fact that 
$$
|P_s x| = \min_{|e|=1} e \cdot P_S x 
$$
and we deduce that for all $e \in \R^d$ such that $|e|=1$, the
test-function $\phi(x) + \eps e \cdot P_S x$ touches $u$ at $x_0$ and
we thus have for all such $e$'s
$$
|A x_0 + \eps P_S e|^\gamma F(A) \ge 0. 
$$
Hence, there exists such an $e$ such that $D \phi (x_0)+\eps P_S e
\neq 0$ and we get the contradiction $F(A) \ge 0$.

Since $P_S x_0 \neq 0$, $\psi$ is smooth in a neighbourhood of $x_0$
and we get the following viscosity inequality 
$$
 |A x_0 + \eps e_0|^\gamma F(A+\eps B) \ge 0
$$
where $e_0 = P_S x_0 /|P_S x_0|$ and $B \ge 0$ since $x \mapsto |P_S
x|$ is convex. Remark next that 
$$
(Ax_0+ \eps e_0) \cdot P_S x_0 = P_S A x_0 \cdot x_0 + \eps |P_S x_0|
\ge \eps |P_S x_0 | >0. 
$$
Hence $Ax_0 + \eps e_0 \neq 0$ and we get the following contradiction
$$
F(A) \ge F(A+\eps B) \ge 0.
$$
The proof is now complete.
\end{proof}

\paragraph{Acknowledgements.} The authors are grateful to
I.~Birindelli and F.~Demengel for pointing out this open problem and
for the fruitful discussions they had together.

Luis Silvestre was partially supported by the Sloan fellowship and NSF grants DMS-1065979 and DMS-1001629.

\nocite{*} \bibliographystyle{siam} \bibliography{degh}

\end{document}